\newtheorem{theorem}{Theorem} [section]
\newtheorem{prop}[theorem]{Proposition}
\newtheorem{lemma}[theorem]{Lemma}
\theoremstyle{definition}
\theoremstyle{remark}
\newtheorem{remark}[theorem]{Remark}
\begin{document}
	\title[ Preperiodic Points with local rationality conditions]{ Preperiodic points with local rationality conditions in the quadratic unicritical family }
	\date{\today}
	\author{Chatchai Noytaptim}
	\address{Department of Mathematics,  Oregon State University; Corvallis OR, 97331 U.S.A.}
	\email{noytaptc@oregonstate.edu}	
	\maketitle
	\thispagestyle{empty}
	\begin{abstract} For rational numbers $c$, we present a trichotomy of the set of totally real (totally $p$-adic, respectively) preperiodic points for maps in the quadratic  unicritical family $f_c(x)=x^2+c$. As a consequence, we classify quadratic polynomials $f_c$ with rational parameters $c\in\mathbb{Q}$ so that $f_c$ has only finitely many totally real (totally $p$-adic, respectively) preperiodic points.  These results rely on an adelic Fekete-type theorem and dynamics of the filled Julia set of $f_c$. Moreover, using a numerical criterion introduced in \cite{NP}, we make explicit calculations of the set of totally real $f_c$-preperiodic points when $c=-1,0,\frac{1}{5}$ and $\frac{1}{4}.$ \end{abstract}
	\section{Introduction}
	Throughout this article, we study dynamics of a quadratic polynomial of the form $f_c(x)=x^2+c$ for $c\in \overline{\mathbb{Q}}.$ An algebraic number $\alpha\in\overline{\mathbb{Q}}$ is called a preperiodic point of $f_c$ if it satisfies $f_c^n(\alpha)=f_c^m(\alpha)\,$ for some integers $0\leq m<n$, where $f_c^n:=f_c\circ...\circ f_c$ denotes the $n$-fold decomposition of $f_c$ with itself. In other words, the (forward) orbit of $\alpha$ under the iteration of $f_c$ is finite. Denote by $\mathrm{PrePer}(f_c)$ the set of all preperiodic points of $f_c$ in $\overline{\mathbb{Q}}.$\\
	
	\indent Recall that  an algebraic number $\alpha$ is totally real if its minimal polynomial over $\mathbb{Q}$ splits completely over $\mathbb{R}$, or equivalently, if every complex embedding $\mathbb{Q}(\alpha)\hookrightarrow \mathbb{C}$ has image contained in $\mathbb{R}$. Let  $\mathbb{Q}^{\mathrm{tr}}$ denote the collection of all totally real algebraic numbers.  One may wonder, for which rational parameters $c\in\mathbb{Q}$ is  the set of totally real $f_c$-preperiodic points $\mathrm{PrePer}(f_c)\cap \mathbb{Q}^{\mathrm{tr}}$  finite.  This question leads to the following result.
	\begin{theorem} \label{mainresult} Let $c$ be a rational number and let $f_c(x)=x^2+c.$ 
		\begin{enumerate}
			\item[(a)] If $c>\frac{1}{4}$, then $\mathrm{PrePer}(f_c)\cap \mathbb{Q}^{\mathrm{tr}}=\emptyset$.
			\item[(b)] If $-2<c\leq \frac{1}{4}$, then $\mathrm{PrePer}(f_c)\cap \mathbb{Q}^{\mathrm{tr}}$ is nonempty and finite.
			\item[(c)] If $c\leq -2$ then $\mathrm{PrePer}(f_c)\subseteq \mathbb{Q}^{\mathrm{tr}}$; in particular, $\mathrm{PrePer}(f_c)\cap \mathbb{Q}^{\mathrm{tr}}$  is infinite.
		\end{enumerate}
	\end{theorem}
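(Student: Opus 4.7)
The strategy exploits the real slice of the complex filled Julia set $K(f_c)$ together with the Galois-invariance of preperiodicity (any conjugate of an $f_c$-preperiodic point is again $f_c$-preperiodic, since $c \in \mathbb{Q}$). For part (a), the polynomial $f_c(x)-x = x^2-x+c$ has negative discriminant $1-4c<0$, so $f_c(x)>x$ for every $x \in \mathbb{R}$; real orbits strictly increase and diverge, so $f_c$ has no real preperiodic points. If a totally real preperiodic $\alpha$ existed, any real embedding $\sigma$ (fixing $c$) would send $\alpha$ to a real preperiodic point, a contradiction.

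For part (c), $c \leq -2$, I would use the classical description of $K(f_c)$: it equals $[-2,2]$ when $c = -2$ and is a real Cantor subset of $[-\beta,\beta]$, $\beta = \tfrac12(1+\sqrt{1-4c})$, when $c<-2$ (the critical orbit escapes and $K(f_c)=J(f_c)$ is uniformly hyperbolic). In either case $K(f_c) \subseteq \mathbb{R}$, so every Galois conjugate of $\alpha \in \mathrm{PrePer}(f_c)$ lies in $K(f_c) \subseteq \mathbb{R}$, giving $\mathrm{PrePer}(f_c) \subseteq \mathbb{Q}^{\mathrm{tr}}$. Infinitude is immediate from the infinitude of periodic points of $f_c$.

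For part (b), $-2 < c \leq \tfrac14$, the fixed points $\tfrac12(1\pm\sqrt{1-4c})$ are totally real (they lie in $\mathbb{Q}$ or in the real quadratic field $\mathbb{Q}(\sqrt{1-4c})$), so the set is nonempty. Finiteness is the heart of the theorem, and my plan is to apply an adelic Fekete--Szeg\H{o} theorem. A totally real $f_c$-preperiodic $\alpha$ has all its Galois conjugates inside the archimedean set $E_\infty := K(f_c) \cap \mathbb{R}$ and inside the $p$-adic filled Julia set $K_p(f_c)$ at every finite prime $p$. A short forward-invariance argument (using $f_c([-\beta,\beta]) \subseteq [-\beta,\beta]$, valid for $-2 \leq c \leq \tfrac14$) together with the escape estimate $f_c(x)>\beta$ for $|x|>\beta$ identifies $E_\infty = [-\beta,\beta]$, whose logarithmic capacity equals $\beta/2 = \tfrac14(1+\sqrt{1-4c})$; this quantity is strictly less than $1$ precisely when $c > -2$. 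Combined with the product formula $\prod_v \mathrm{cap}_v(K_v(f_c)) = 1$ for the monic polynomial $f_c$, the adelic capacity of the restricted system $\{E_\infty\} \cup \{K_p(f_c)\}_{p}$ is strictly less than $1$, and the adelic Fekete--Szeg\H{o} theorem then forces only finitely many such $\alpha$.

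The main obstacle is the finiteness assertion in (b): instantiating the adelic Fekete--Szeg\H{o} theorem in the dynamical setting of the filled Julia sets of $f_c$ over $\mathbb{Q}$, verifying the product formula for the local capacities, and then using the explicit archimedean capacity drop $\beta/2 < 1$ to secure the strict inequality that gives finiteness. Parts (a) and (c), by contrast, reduce to an elementary real-variable analysis of the iteration together with Galois invariance and a citation of the classical real-Julia-set description for $c \leq -2$.
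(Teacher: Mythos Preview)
Your proposal is correct and follows essentially the same route as the paper: part~(a) via the elementary observation that real orbits escape when $c>\tfrac14$, part~(c) via the classical fact that $\mathcal{K}_{\mathbb{C}}(f_c)\subseteq\mathbb{R}$ for $c\le-2$, and part~(b) via the adelic Fekete theorem applied to $E_\infty=[-\beta,\beta]$ (capacity $\beta/2<1$) together with the nonarchimedean filled Julia sets (each of transfinite diameter~$1$, by Baker--Hsia). You even supply the nonemptiness argument in~(b) via the totally real fixed points, a point the paper's own proof leaves implicit.
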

	The idea behind the proof of  Theorem \ref{mainresult}, particularly part (b), is an adelic version of Fekete's theorem;  for example,  Baker-Hsia \cite[Proposition 6.1]{BH}, Cantor \cite[Theorem 5.1.2]{Can}, and Rumely \cite[Theorem 6.3.1]{Rum}. We defer details until section $4$.  \\ 
	
	\indent For some rational numbers $c\in (-2,\frac{1}{4}]$, we are able to compute totally real preperiodic points of $f_c$ explicitly by using the technique in \cite{NP}(which concerns totally real algebraic integers with all Galois conjugates lying in an interval of length less than $4$). However, for arbitrary $c\in\mathbb{Q}$ the preperiodic points of $f_c$ are not always algebraic integers. Thus we need to  get around this issue by making an adequate change of coordinates to $f^{\phi}_c$ by an affine automorphism $\phi$. This gives $\mathrm{PrePer}(f^{\phi}_c)\subseteq \overline{\mathbb{Z}}$, see Lemma \ref{coorchangetoalgint}. 
	 Hence we have the following full description the set $\mathrm{PrePer}(f_c)\cap \mathbb{Q}^{\mathrm{tr}}$ where $c\in \{-1,0,\frac{1}{5}, \frac{1}{4}\}$.
	\begin{theorem}It holds that
	\begin{enumerate}
		\item[(a)] $\mathrm{PrePer}(x^2)\cap \mathbb{Q}^{\mathrm{tr}}=\{\pm 1,0\}$.
		\item[(b)] $\mathrm{PrePer}\left(x^2+\frac{1}{4}\right)\cap \mathbb{Q}^{\mathrm{tr}}=\left\{\pm\frac{1}{2}\right\}$.
		\item[(c)] $\mathrm{PrePer}(x^2-1)\cap \mathbb{Q}^{\mathrm{tr}}=\left\{0,\pm1,\pm\sqrt{2}, \frac{-1\pm\sqrt{5}}{2}, \frac{1\pm\sqrt{5}}{2}\right\}$.
		\item[(d)] $\mathrm{PrePer}\left(x^2+\frac{1}{5}\right)\cap \mathbb{Q}^{\mathrm{tr}}=\left\{ \frac{-1\pm\sqrt{5}}{2\sqrt{5}}, \frac{1\pm\sqrt{5}}{2\sqrt{5}}\right\}$.
	\end{enumerate}
\end{theorem}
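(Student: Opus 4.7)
The plan is to combine Theorem~\ref{mainresult}(b), which already guarantees the finiteness of each set in question, with two complementary tools: the enumeration in \cite{NP} of totally real algebraic integers whose Galois conjugates lie in a short real interval, and a direct dynamical analysis for the case where the change of coordinates required to integralise the preperiodic points is incompatible with the length-$4$ hypothesis of \cite{NP}. A first observation I would use throughout is that for $c\in(-2,\tfrac14]$ the real slice of the filled Julia set of $f_c$ equals the forward invariant interval $I_c=[-\beta_c,\beta_c]$, where $\beta_c=\tfrac{1}{2}(1+\sqrt{1-4c})$ is the larger real fixed point; hence every totally real preperiodic point of $f_c$ has all of its Galois conjugates in $I_c$, whose length $1+\sqrt{1-4c}$ is strictly less than $4$ in every case at hand.

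For $c\in\{0,-1,\tfrac14\}$ I would proceed by enumeration. The iterates $f_c^n(x)-f_c^m(x)$ already have integer coefficients when $c\in\{0,-1\}$, and Lemma~\ref{coorchangetoalgint} applied with the rescaling $\phi(x)=x/2$ accomplishes the same for $c=\tfrac14$, so in each case the totally real preperiodic points (possibly transported by $\phi^{-1}$) are algebraic integers with conjugates in the appropriate interval of length less than~$4$. The algorithm of \cite{NP} then yields a short explicit list of candidates, which one checks for preperiodicity by direct iteration; for example, when $c=-1$ the interval is $[-\tfrac{1+\sqrt5}{2},\tfrac{1+\sqrt5}{2}]$, the enumeration produces exactly the nine elements displayed in part~(c), and all of them turn out to be $f_{-1}$-preperiodic.

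The case $c=\tfrac15$ is the delicate one, because the preperiodic points are not algebraic integers and any affine conjugation defined over $\mathbb{Q}$ which integralises them pushes the relevant interval past length~$4$. Here I would argue directly. Since $c=\tfrac15$ lies strictly inside the main cardioid of the Mandelbrot set, the fixed point $\tfrac{\sqrt5-1}{2\sqrt5}$ is attracting while $\tfrac{\sqrt5+1}{2\sqrt5}$ is repelling, the real Julia set of $f_{1/5}$ is just $\{\pm\beta_{1/5}\}$, and $f_{1/5}$ has no real periodic cycles of period~$\geq 2$; a totally real preperiodic point is therefore eventually fixed. A one-step backward-orbit computation then completes the argument: the real preimages of the two fixed points are exactly $\pm\tfrac{\sqrt5\pm 1}{2\sqrt5}$, already on the list, while the preimages of the two negative entries $-\tfrac{\sqrt5\pm 1}{2\sqrt5}$ would require extracting $\sqrt{-(7\pm\sqrt5)/10}$, which is non-real. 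Hence the set in part~(d) is complete.

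The main obstacle is precisely this case $c=\tfrac15$: reconciling the rationality mismatch between the natural integralising conjugation (which only lives over $\mathbb{Q}(\sqrt5)$) and the length-$4$ hypothesis of \cite{NP} is what forces the dynamical shortcut above. Once this is in hand, the other three cases reduce to a finite and mechanical enumeration-and-verification.
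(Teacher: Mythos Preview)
For parts (a)--(c) your outline coincides with the paper's proof. The divergence is in part~(d), and it stems from a mistaken premise on your side.

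You write that ``any affine conjugation defined over $\mathbb{Q}$ which integralises [the preperiodic points] pushes the relevant interval past length~$4$'', and therefore abandon the enumeration strategy in favour of a direct dynamical argument. But Lemma~\ref{coorchangetoalgint} does not require the conjugation to be $\mathbb{Q}$-rational: for $c=a/b$ it prescribes $\phi(x)=b^{-1/2}x$, which for $c=\tfrac15$ means $\phi(x)=x/\sqrt5$, defined over $\mathbb{Q}(\sqrt5)$. Under this $\phi$ the real filled Julia set of $h^\phi(x)=\tfrac{1}{\sqrt5}(x^2+1)$ is exactly the interval $I=\bigl[-\tfrac{1+\sqrt5}{2},\tfrac{1+\sqrt5}{2}\bigr]$ already treated in part~(c), so the paper simply reuses that computation and then discards the spurious candidates $0,\pm1,\pm\sqrt2$ because they fail to lie in the $5$-adic filled Julia set of $h^\phi$. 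The one subtlety you may have been guarding against---that $h^\phi$ has irrational coefficients, so $\mathrm{Gal}(\overline{\mathbb{Q}}/\mathbb{Q})$ need not preserve $h^\phi$-preperiodicity---is harmless here: if $\alpha\in\mathrm{PrePer}(h)\cap\mathbb{Q}^{\mathrm{tr}}$ then every $\mathbb{Q}$-conjugate of $\sqrt5\,\alpha$ has the form $\pm\sqrt5\,\sigma(\alpha)$ with $\sigma(\alpha)\in[-a_{1/5},a_{1/5}]$, and these all land in the symmetric interval $I$.

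Your alternative for (d) is nonetheless correct and more elementary: since $c=\tfrac15$ lies in the main cardioid, every real orbit in $(-\beta_c,\beta_c)$ converges to the attracting fixed point, so real preperiodic points are eventually fixed, and a one-step backward-orbit check finishes. What the paper's route buys is uniformity---parts (c) and (d) share a single \cite{NP}/Robinson enumeration, and the method does not rely on $f_c$ being hyperbolic---whereas your route bypasses \cite{NP} and \cite{Rob} entirely for this particular parameter.
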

	Note that the technique presented in \cite{NP} is not limited to rational parameters $c\in \{-1,0,\frac{1}{5}, \frac{1}{4}\}$, but for other parameters $c\in (-2,\frac{1}{4}]\cap \mathbb{Q}$ the computation becomes more difficult.  It would also be interesting to give a bound on $\mathrm{PrePer}(f_c)\cap \mathbb{Q}^{\mathrm{tr}}$ for additional parameters $c$, but we do not pursue it here.\\
	
	In a similar spirit to Theorem \ref{mainresult}, we next provide a trichotomy result for  the set $\mathrm{PrePer}(f_c)\cap \mathbb{Q}^{\mathrm{tp}}$, where  $\mathbb{Q}^{\mathrm{tp}}$ denotes the collection of all totally $p$-adic algebraic numbers. Recall an algebraic number $\alpha$ is totally $p$-adic if its minimal polynomial over $\mathbb{Q}$ splits completely over $\mathbb{Q}_p$, or equivalently, if every  embedding $\mathbb{Q}(\alpha)\hookrightarrow \mathbb{C}_p$ has image contained in $\mathbb{Q}_p$. 
	\begin{theorem} \label{maintotpadic} Fix an odd prime $p$. Let $c\in\mathbb{Q}$ and  $f_c(x)=x^2+c$.   
		\begin{enumerate}
			\item[(a)] If $|c|_p\leq 1$, then $\mathrm{PrePer}(f_c)\cap \mathbb{Q}^{\mathrm{tp}}$ is  finite.
			\item[(b)] If $|c|_p>1$  and $-c$ is not a square in $\mathbb{Q}_p$, then $\mathrm{PrePer}(f_c)\cap \mathbb{Q}^{\mathrm{tp}}$ is empty.
			\item[(c)] If $|c|_p>1$  and $-c$ is a square in $\mathbb{Q}_p$, then $\mathrm{PrePer}(f_c)\subset \mathbb{Q}^{\mathrm{tp}}$; in particular, $\mathrm{PrePer}(f_c)$ contains infinitely many totally $p$-adic $f_c$-preperiodic points.
		\end{enumerate}
	\end{theorem}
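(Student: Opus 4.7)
My plan is to treat the three cases via $p$-adic dynamics of $f_c$, in parallel with the archimedean trichotomy in Theorem~\ref{mainresult}. The common starting point for (b) and (c) is the ultrametric escape dichotomy on $\mathbb{C}_p$: when $|c|_p>1$, a direct computation shows $|f_c(z)|_p>|z|_p$ whenever $|z|_p^2\neq|c|_p$, so every bounded orbit is pinned to the circle $\{|z|_p=|c|_p^{1/2}\}$, and in particular the filled Julia set $K_{p,c}$ sits on this circle. When $|c|_p\le 1$, $f_c$ has good reduction at $p$ and $K_{p,c}=\overline{D}(0,1)$. For case (b) it suffices to prove $K_{p,c}\cap\mathbb{Q}_p=\emptyset$, since any totally $p$-adic preperiodic point lies in that intersection. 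Given $\alpha\in\mathbb{Q}_p\cap K_{p,c}$, the dichotomy forces $|\alpha|_p=|c|_p^{1/2}$, which in $\mathbb{Q}_p$ requires $v_p(c)$ to be even; writing $\alpha=p^{-k}u$ and $c=p^{-2k}d$ with $u,d\in\mathbb{Z}_p^\times$, the requirement that $f_c(\alpha)$ remain on the circle forces the leading-order cancellation $u^2\equiv-d\pmod{p}$. For odd $p$ this is equivalent to $-c\in(\mathbb{Q}_p^\times)^2$, contradicting the hypothesis of (b).

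For case (c) the aim is the stronger statement $K_{p,c}\subset\mathbb{Q}_p$. The recurrent Hensel ingredient is that for any $\alpha\in K_{p,c}$, the factorization $\alpha-c=-c(1-\alpha/c)$ has unit part congruent to $1$ mod $p$, so $\alpha-c$ is a square in $\mathbb{Q}_p$ iff $-c$ is. Therefore the preimages $\pm\sqrt{\alpha-c}$ of any $\alpha\in K_{p,c}\cap\mathbb{Q}_p$ again lie in $\mathbb{Q}_p$; the same computation applied to the fixed-point discriminant $1-4c=-4c(1-1/(4c))$ puts $\mathrm{Fix}(f_c)\subset K_{p,c}\cap\mathbb{Q}_p$. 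Inducting on preimage depth, $\bigcup_n f_c^{-n}(\mathrm{Fix}(f_c))\subset K_{p,c}\cap\mathbb{Q}_p$. Under $|c|_p>1$, $f_c$ is expanding on the Cantor set $K_{p,c}$ and topologically conjugate to the one-sided $2$-shift, so iterated preimages of any point are dense in $K_{p,c}$; since $\mathbb{Q}_p$ is closed in $\mathbb{C}_p$, density combined with closedness upgrades the inclusion to $K_{p,c}\subset\mathbb{Q}_p$. Because $\mathrm{PrePer}(f_c)\subset K_{p,c}$ is Galois-stable, every preperiodic point is therefore totally $p$-adic, and infinitude is automatic since $f_c$ has preperiodic points of all preperiods.

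Case (a) is the subtlest and will be carried by the adelic Fekete machinery already invoked for Theorem~\ref{mainresult}(b). When $|c|_p\le 1$ we have $K_{p,c}=\overline{D}(0,1)$ and $K_{p,c}\cap\mathbb{Q}_p=\mathbb{Z}_p$, whose capacity as a compact subset of $\mathbb{C}_p$ is $p^{-1/(p-1)}<1$. After conjugating $f_c$ via Lemma~\ref{coorchangetoalgint} so that the preperiodic points become algebraic integers, I would assemble the adelic set with $E_v=K_{v,\tilde c}$ at each place $v\neq p$ and $E_p=\mathbb{Z}_p$; each $K_{v,\tilde c}$ has local capacity $1$, so the adelic capacity drops strictly below $1$ thanks solely to the $p$-adic factor, and the Baker--Hsia / Rumely Fekete theorem delivers the desired finiteness. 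The main obstacle I anticipate is the adelic bookkeeping at the bad-reduction places of $\tilde c$ (where $K_{v,\tilde c}$ is a Cantor set sitting on a large circle rather than the standard unit affinoid), and verifying that the Fekete hypotheses (Galois stability, non-triviality, and correct normalization of local capacity) are met at every place simultaneously, so that the count in $f_c$-coordinates matches the count in $f_c^\phi$-coordinates.
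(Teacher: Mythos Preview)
Your treatments of (b) and (c) are correct and in fact more self-contained than the paper's: the paper simply invokes the Benedetto--Briend--Perdry trichotomy (Theorem~\ref{BBPmainthm}) as a black box to conclude $\mathcal{K}_{\mathbb{Q}_p}(f_c)=\emptyset$ in case (b) and $\mathcal{K}_{\mathbb{C}_p}(f_c)=\mathcal{K}_{\mathbb{Q}_p}(f_c)\subset\mathbb{Q}_p$ in case (c), whereas you re-derive those facts directly from the ultrametric escape dichotomy, a Hensel square-root computation, and the shift conjugacy. Either approach is fine; yours buys independence from \cite{BBP} at the cost of a page of argument.

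For (a), your instinct to use adelic Fekete with $E_p=\mathbb{Z}_p$ and capacity $p^{-1/(p-1)}$ is exactly right, but the conjugation via Lemma~\ref{coorchangetoalgint} is unnecessary and is precisely what manufactures the obstacle you anticipate. The paper does \emph{not} conjugate: it takes $E_v=\mathcal{K}_{\mathbb{C}_v}(f_c)$ at every place $v\neq p$ (archimedean included) and $E_p=\mathbb{Z}_p$. By Baker--Hsia (equation~(\ref{capvadicfilled})), $d_\infty(\mathcal{K}_{\mathbb{C}_v}(f_c))=1$ at every $v$, good or bad reduction, so the adelic transfinite diameter is just $p^{-1/(p-1)}<1$ and Proposition~\ref{adelicfekete} applies directly. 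The point you were missing is that the adelic set condition ``$E_v=\mathcal{O}_v$ for almost all $v$'' is already met by the raw filled Julia sets (Proposition~\ref{padicfjs}(i) gives $\mathcal{K}_{\mathbb{C}_v}(f_c)=\mathcal{O}_v$ whenever $|c|_v\le 1$), so there is no need to force integrality by a coordinate change. Dropping $\phi$ also removes your worry about whether $b^{1/2}\in\mathbb{Q}_p$ and whether the totally $p$-adic condition transports through the conjugation.
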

Again, the first part of Theorem  \ref{maintotpadic} is an application of an adelic Fekete theorem. Whereas part (b) and part (c) are consequences of the shape of $p$-adic filled Julia set of $f_c$ due to Benedetto-Briend-Perdry \cite[Th$\acute{\text{e}}$or$\grave{\text{e}}$me 1.1]{BBP}.  In contrast to the totally real situation, the set $\mathrm{PrePer}(f_c)\cap \mathbb{Q}^{\mathrm{tp}}$  is not easily computed quantitatively.\\

	\indent This article is organized as follows. We study dynamics of $f_c$ over archimedean and nonarchimedean fields  in $\S 2$.  We then  provide  the trichotomy of the set $\mathrm{PrePer}(f_c)\cap \mathbb{Q}^{\mathrm{tr}}$ and the set  $\mathrm{PrePer}(f_c)\cap \mathbb{Q}^{\mathrm{tp}}$  in $\S 3$ and $\S 5$, respectively. In $\S 4$, we compute the set $\mathrm{PrePer}(f_c)\cap \mathbb{Q}^{\mathrm{tr}}$ for certain rational  parameters $c\in (-2,\frac{1}{4}]$.   \\
	
	\noindent \textbf{Acknowledgement.} The author would like to thank his PhD advisor, Clayton Petsche, for several fruitful conversations and guidance over the course of  this project and useful suggestions  concerning the exposition.
	\section{Preliminaries}
	\indent In what follows, $L$ is an arbitrary field which is complete with respect to an absolute value $|\cdot|$. If $|\cdot|$ is an archimedean absolute value, we say that $L$ is an archimedean field (e.g., $L$ is either $\mathbb{R}$ or $\mathbb{C}$). Otherwise, we call $L$ a nonarchimedean field (e.g., $\mathbb{C}_p$ or $\mathbb{Q}_p$  endowed with $|\cdot|_p$ ).
Given a polynomial $f(x)\in L[x]$ of degree at least $2$. Then the corresponding filled Julia set of $f$ is defined as 
$$\mathcal{K}_{L}(f)=\{x\in L\mid  \text{the forward orbit of \,\,$x$\,\,is bounded with respect to}\,\,|\cdot|\}.$$
We present a trichotomy of the shape of the filled Julia set of $f_c=x^2+c$ over an archimedean valued field $(L,|\cdot|)$.
	\begin{prop} \label{filledJuliatrichotomy} Let $c$ be a real number.
		\begin{enumerate}
			\item[(i)]  If $c>\frac{1}{4}$, then $\mathcal{K}_{\mathbb{R}}(f_c)=\emptyset$.
		\end{enumerate}
		Let $a_c=\frac{1+\sqrt{1-4c}}{2}$ when $c\leq \frac{1}{4}$. 
				\begin{enumerate}
			\item[(ii)] If  $-2\leq c\leq \frac{1}{4}$, then $\mathcal{K}_{\mathbb{R}}(f_c)=[-a_c,a_c]$.
			\item[(iii)] If  $c\leq -2$, then $\mathcal{K}_{\mathbb{C}}(f_c)\subseteq [-a_c,a_c]$.
		\end{enumerate} 
	\end{prop}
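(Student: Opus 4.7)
The plan is to handle the three cases separately: parts (i) and (ii) are elementary real fixed-point analyses, while (iii) draws on standard one-dimensional complex dynamics. For part (i), the fixed-point polynomial $f_c(x)-x=x^2-x+c$ has negative discriminant and positive leading coefficient when $c>\tfrac{1}{4}$, so $f_c(x)>x$ for every real $x$; the forward orbit of any $x\in\mathbb{R}$ is then strictly increasing, and since a bounded monotone sequence would have to converge to a real fixed point (of which there are none), every orbit diverges to $+\infty$, yielding $\mathcal{K}_{\mathbb{R}}(f_c)=\emptyset$. For part (ii), $a_c$ is the larger real fixed point (with $a_c^2+c=a_c$), and I would first show that $[-a_c,a_c]$ is forward invariant: for $|x|\le a_c$ one has $f_c(x)\in[c,a_c]$, and the inclusion $[c,a_c]\subseteq[-a_c,a_c]$ reduces to $c\ge -a_c$, which an elementary computation shows is equivalent to $c\ge -2$. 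The reverse inclusion follows from the observation that $x>a_c$ gives $f_c(x)>x>a_c$ (as $a_c$ is the larger root of $x^2-x+c$), producing a strictly increasing unbounded orbit; the case $x<-a_c$ reduces to this via $f_c(x)>a_c$.

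For part (iii), the escape estimate extends to $\mathbb{C}$: if $|z|>a_c$ then $|f_c(z)|\ge|z|^2-|c|>|z|$, giving $\mathcal{K}_{\mathbb{C}}(f_c)\subseteq\overline{D(0,a_c)}$. The substantive step is to prove $\mathcal{K}_{\mathbb{C}}(f_c)\subseteq\mathbb{R}$; my approach is to show that every periodic point of $f_c$ is real. The preimage identity
\[f_c^{-1}([-a_c,a_c])=[-a_c,-b_c]\cup[b_c,a_c],\qquad b_c=\sqrt{-a_c-c}\ge 0,\]
is valid precisely when $c\le -2$ (since then $z^2\in[-a_c-c,a_c^2]$ forces $z$ real), and by induction $f_c^{-n}([-a_c,a_c])$ partitions into $2^n$ closed subintervals of $[-a_c,a_c]$ on each of which $f_c^n$ is a monotone bijection onto $[-a_c,a_c]$; the intermediate value theorem then produces a real fixed point of $f_c^n$ in each subinterval, exhausting all $2^n$ roots of the degree-$2^n$ polynomial $f_c^n(x)-x$. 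Combined with the classical fact that the Julia set of $f_c$ is the closure of its repelling periodic points, this yields that the Julia set is contained in $\mathbb{R}$. Finally, for $c\le -2$ the critical orbit either escapes (when $c<-2$, since $f_c^2(0)=c^2+c>a_c$) or is strictly preperiodic to the repelling fixed point $a_c$ (when $c=-2$), so $f_c$ has no bounded Fatou component and $\mathcal{K}_{\mathbb{C}}(f_c)$ coincides with the Julia set, hence is contained in $\mathbb{R}$; intersecting with $\overline{D(0,a_c)}$ delivers $\mathcal{K}_{\mathbb{C}}(f_c)\subseteq[-a_c,a_c]$.

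The main obstacle is the final reduction in part (iii): inferring $\mathcal{K}_{\mathbb{C}}(f_c)\subseteq\mathbb{R}$ from the reality of periodic points relies on both the density of periodic points in the Julia set and the equality of $\mathcal{K}_{\mathbb{C}}(f_c)$ with the Julia set in the absence of bounded Fatou components---both standard but non-elementary facts from one-dimensional complex dynamics. An alternative, more self-contained route would be to verify directly that each component of $f_c^{-n}(\overline{D(0,a_c)})$ is a topological disk whose diameter shrinks to the length of the real interval it contains (via inverse-branch contraction); the $c=-2$ boundary case, where one inverse branch has an indifferent fixed point, would then be handled separately by the Joukowski conjugacy $w\mapsto w+1/w$ that realises $f_{-2}$ as a factor of $w\mapsto w^2$ on $\mathbb{C}\setminus[-2,2]$.
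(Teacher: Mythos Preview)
Your proposal is correct. Parts (i) and (ii) match the paper's argument essentially line for line: the paper makes the divergence in (i) quantitative by writing $c=\tfrac14+\varepsilon$ and showing $f_c^n(x)>x+n\varepsilon$, and in (ii) obtains a uniform increment $f_c(x)>x+\delta^2$ for $x\ge x_0>a_c$, but the mechanism is the same monotone-orbit-with-no-larger-fixed-point idea you describe.

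For (iii) you take a different route within the same circle of ideas. The paper argues more economically: it observes that $f_c^{-1}([-a_c,a_c])\subseteq[-a_c,a_c]$, so every iterated preimage of the repelling fixed point $a_c$ lies in $[-a_c,a_c]$; since such preimages are dense in $\mathcal{J}(f_c)$ \cite[Corollary~4.13]{Miln}, one gets $\mathcal{J}(f_c)\subseteq[-a_c,a_c]$ directly. The paper then passes from $\mathcal{J}(f_c)$ to $\mathcal{K}_{\mathbb{C}}(f_c)$ by a purely topological observation: since $\partial\mathcal{K}_{\mathbb{C}}(f_c)=\mathcal{J}(f_c)\subseteq\mathbb{R}$, the filled Julia set can have no interior in $\mathbb{C}$, hence equals its boundary. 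Your approach instead counts real periodic points (the $2^n$ monotone branches of $f_c^n$ over $[-a_c,a_c]$ exhaust the $2^n$ roots of $f_c^n(x)-x$) and invokes the density of repelling periodic points in $\mathcal{J}(f_c)$; you then justify $\mathcal{K}_{\mathbb{C}}(f_c)=\mathcal{J}(f_c)$ dynamically, by tracking the critical orbit and appealing to the Fatou--Sullivan classification of bounded Fatou components. Your version does more work but is more explicit about why no bounded Fatou component exists; the paper's version is shorter, needing only backward invariance of the interval and a single density theorem, and its topological endgame sidesteps the critical-orbit analysis entirely.
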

\begin{remark} It is worthwhile to point out  that the equality $\mathcal{K}_{\mathbb{C}}(f_c)=[-a_c,a_c]$ in (iii) occurs only when $c=-2$. That is, the complex filled Julia set of the Chebyshev polynomial $f_{-2}(x)=x^2-2$ is the closed real interval $[-2,2]$ and it coincides with the Julia  set of $f_{-2}(x).$
\end{remark}
	\begin{proof} Let $x$ be a real number. Since $c>\frac{1}{4}$, there is $\varepsilon>0$ so that $c=\frac{1}{4}+\varepsilon$. Thus $$f_c(x)=x^2+c=x^2-x+\frac{1}{4}+x+\varepsilon=\left(x-\frac{1}{2}\right)^2+x+\varepsilon>x+\varepsilon.$$ Inductively, we obtain that $$f^n_c(x)>f^{n-1}_c(x)+\varepsilon>x+(n-1)\varepsilon+\varepsilon=x+n\varepsilon.$$ Letting $n\rightarrow\infty$, it means that the orbit of $x$ under $f_c$ is unbounded. The proof of (i) is complete. \\
		\indent In part (ii), we assert that $\mathcal{K}_{\mathbb{R}}(f_c)=[-a_c,a_c]$ where $a_c=\frac{1+\sqrt{1-4c}}{2}$ is one of the fixed points of $f_c$.  First, we verify that $\mathcal{K}_{\mathbb{R}}(f_c)\subseteq [-a_c,a_c]$. For $-2\leq c\leq \frac{1}{4}$, it is easy to compute that $\frac{1}{2}\leq a_c\leq 2$. Let  $x_0>a_c\geq \frac{1}{2}$.  Then we may consider the polynomial  $g(x)=f_c(x)-x$ in which  $g'(x)=2x-1>0$. That is, $g$ is increasing on $(a_c,+\infty).$ Let $\delta=x_0-a$ and so $x_0=a_c+\delta$. For each $x\geq x_0$, 
it follows that
			\begin{align*}
				g(x)>g(a_c+\delta)
				&\Longrightarrow f_c(x)-x>a_c^2+2a_c\delta+\delta^2+c-a_c-\delta\\
				&\Longrightarrow f_c(x)-x>\delta^2 \quad (\text{using}\,\,a_c^2+c=a_c\,\,\text{and}\,\,2a_c\geq 1)\\
				&\Longrightarrow f_c(x)>x+\delta^2
			\end{align*}
The iteration of $f_c$ implies that  $f_c^n(x)\geq x_0+n\delta^2\rightarrow+\infty$ as $n\rightarrow+\infty$.
	In the case $x<-a_c$, we exploit the fact that $f_c$ is even  to deduce that again $f^n_c(x)\rightarrow +\infty$ as $n\rightarrow+\infty$. We conclude that $\mathcal{K}_{\mathbb{R}}(f_c)\subseteq [-a_c,a_c]$.
	
		 For the reverse inclusion $[-a_c, a_c]\subseteq \mathcal{K}_{\mathbb{R}}(f_c)$. First  recall $\frac{1}{2}\leq a_c\leq 2$ for $-2\leq c\leq \frac{1}{4}$. This gives us $-1\leq 1-a_c\leq \frac{1}{2}$ and so $-a_c\leq a_c-a_c^2=c$. Suppose that $x\in [-a_c,a_c]$. Thus it follows that $$c\leq x^2+c\leq a_c^2+c=a_c.$$ Equivalently, $f_c([-a_c,a_c])\subseteq [c,a_c]$. Since we know that $-a_c\leq c$,  this immediately implies that $f_c([-a_c,a_c])\subseteq [-a_c,a_c]$ and hence, inductively, it yields $f_c^n([-a_c,a_c])\subseteq [-a_c,a_c]$ for all $n\geq 1$. This proves $[-a_c,a_c]\subseteq \mathcal{K}_{\mathbb{R}}(f_c)$.
	\\ \indent To prove (iii), 
			we follow the idea  presented in \cite[Lemma 7.1]{Miln} and we include details here for completeness. 
			We first assert that the complex Julia set of $f_c$ is contained in the real line when $c\leq -2$. To see this, we consider the  real interval $[-a_c,a_c]$. It is not hard to see that solutions of equation $f_c(x)=w$ for $w\in [-a_c,a_c]$ are again in $[-a_c,a_c]$.  This means that $$f^{-n}_c([-a_c,a_c])\subseteq [-a_c,a_c]$$ inductively, for all $n\geq 1.$ Notice that  $[-a_c,a_c]$ contains a (real) repelling fixed point  $a_c$ of $f_c$ because the multiplier $|f'_c(a_c)|=2a_c>1$ when $c\leq -2$, see \cite[Definition 4.5]{Miln}. It means that $a_c$ is in the Julia set $\mathcal{J}(f_c)$ of $f_c.$
			  Hence $[-a_c,a_c]$ must contain the entire Julia set $\mathcal{J}(f_c)$ because the iterated preimages of $a_c$ are dense in $\mathcal{J}(f_c)$, see \cite[Corollary 4.13]{Miln}. In other words, 
			  $$\mathcal{J}(f_c)=\overline{\{x\in\mathbb{R}\mid f_c^n(x)=a_c,\,\,n\geq0\}}\subseteq [-a_c,a_c].$$ It follows that, for any real number $c\leq -2$, the Julia set $\mathcal{J}(f_c)$ must coincide with the filled Julia set  $\mathcal{K}_{\mathbb{C}}(f_c)$ because $\mathcal{J}(f_c)=\partial \mathcal{K}_{\mathbb{C}}(f_c)$ (i.e., the Julia set of polynomial map  $f_c$  is the topological boundary of the filled Julia set of $f_c$) and $\mathcal{K}_{\mathbb{C}}(f_c)$ has empty interior. This follows from the fact that a closed set with empty interior is its own boundary, for example, see \cite[Remark 9.3]{Gil}. Consequently, the filled Julia set $\mathcal{K}_{\mathbb{C}}(f_c)\subseteq [-a_c,a_c]$ when $c\leq -2$.

	\end{proof}
	When $L$ is a nonarchimedean field, one obtains the following shapes of nonarchimedean filled Julia set of $f_c(x)=x^2+c\in L[x]$. This is also proved in \cite{BBP}, but we include the proof here for completeness.
	\begin{prop}\label{padicfjs}   Let $L$ be a complete field equipped with a nonarchimedean absolute value $|\cdot|$. Then the filled Julia set $\mathcal{K}_L(f_c)$ has the following dichotomy : 
		\begin{enumerate}
			\item[(i)] If $|c|\leq 1$, then $\mathcal{K}_L(f_c)=\{x\in L\mid |x|\leq 1\}$.
			\item[(ii)] If $|c|>1$, then $\mathcal{K}_L(f_c)\subseteq \{x\in L \mid |x|^2=|c|\}$.
		\end{enumerate}
	\end{prop}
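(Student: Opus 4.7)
The plan is to exploit the ultrametric inequality $|a+b|\leq \max(|a|,|b|)$, together with its sharpened form $|a+b|=\max(|a|,|b|)$ when $|a|\neq|b|$, and to track how $|f_c^n(x)|$ evolves under iteration in each range of $|x|$.

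For part (i), assume $|c|\leq 1$. First I would show forward invariance of the closed unit disk: if $|x|\leq 1$, then $|f_c(x)|=|x^2+c|\leq \max(|x|^2,|c|)\leq 1$, so inductively $|f_c^n(x)|\leq 1$ for all $n$. This gives $\{|x|\leq 1\}\subseteq \mathcal{K}_L(f_c)$. For the reverse inclusion, suppose $|x|>1$; then $|x|^2>1\geq |c|$, so the strict form of the ultrametric inequality yields $|f_c(x)|=|x|^2>|x|>1$. Inductively $|f_c^n(x)|=|x|^{2^n}\to\infty$, so $x\notin \mathcal{K}_L(f_c)$.

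For part (ii), assume $|c|>1$ and $x\in \mathcal{K}_L(f_c)$. I would rule out the cases $|x|^2<|c|$ and $|x|^2>|c|$ separately, leaving only $|x|^2=|c|$. If $|x|^2<|c|$, the strict ultrametric inequality gives $|f_c(x)|=|c|$, and since $|c|^2>|c|$ we get $|f_c^2(x)|=|f_c(x)|^2=|c|^2$; continuing by induction, $|f_c^n(x)|=|c|^{2^{n-1}}\to\infty$, contradicting boundedness. If instead $|x|^2>|c|$, then $|f_c(x)|=|x|^2$, and because $|f_c(x)|^2=|x|^4>|x|^2>|c|$ the same argument iterates to give $|f_c^n(x)|=|x|^{2^n}\to\infty$, again a contradiction. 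Hence $|x|^2=|c|$, as required.

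There is no real obstacle here beyond bookkeeping: the only subtlety is remembering to apply the equality form of the ultrametric inequality only when the two absolute values involved are genuinely distinct, which is precisely what guarantees escape in each of the non-invariant regimes. Since in both part (i) and the two subcases of part (ii) we are working well away from the locus $|x|^2=|c|$, this condition is automatically met.
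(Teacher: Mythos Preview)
Your proof is correct and matches the paper's argument essentially step for step: both use the ultrametric inequality (and its strict form when $|x|^2\neq|c|$) to show forward invariance of the unit disk in case (i), escape when $|x|>1$, and in case (ii) escape in each of the regimes $|x|^2>|c|$ and $|x|^2<|c|$. The only cosmetic difference is that the paper dispatches the subcase $|x|^2<|c|$ by noting $|f_c(x)|=|c|$ and then invoking the already-proved subcase $|x|^2>|c|$, whereas you iterate directly to get $|f_c^n(x)|=|c|^{2^{n-1}}$.
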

	\begin{proof} 
		Suppose that $|c|\leq 1$. If $|x|\leq 1$, then we have by the strong triangle inequality that
			$|f_c(x)|=|x^2+c|\leq \max\{|x|^2,|c|\}\leq 1.$ Inductively, we can verify, for all $n\geq 1$, that $|f^n_c(x)|\leq 1$. Hence $x\in \mathcal{K}_L(f_c)$ and it gives the inclusion $\{x\in L\mid  |x|\leq 1\}\subseteq \mathcal{K}_L(f_c).$ If $|x|>1$, it implies
			$|f_c(x)|=\max\{|x|^2,|c|\}=|x|^2>1$ and computing the iterations of $x$, by induction, for all $n\geq 1$, yields $$|f^n_c(x)|=|x|^{2^n}.$$ Since $|x|>1$, it follows that the orbit of $x$ under applying the map $f_c$ grows without bounds. Therefore $x$ does not belong to $\mathcal{K}_L(f_c)$.  This completes part (i).\\
		\indent To verify part (ii), suppose that $|c|>1$. To obtain the desired inclusion, it suffices to show that if $x\in L$ satisfies $|x|^2\neq |c|$, then $x\not\in \mathcal{K}_L(f_c)$. If $|x|^2>|c|$, it follows from the strong triangle inequality that  $$|f_c(x)|=|x|^2>1.$$ Inductively,  $|f^n_c(x)|=|x|^{2^n}$ tends to $+\infty$ as $n$ tends to $+\infty$. Similarly, we can show that $|f^n_c(x)|$ is unbounded as $n\rightarrow \infty$ for $|x|^2<|c|$. In fact, by the strong triangle inequality we have $|f_c(x)|=|c|$ and $|f^2(x)|=|c|^2>|c|$. The conclusion follows from the first case when $|x|^2>|c|.$ 
	\end{proof}
	
	\section{Totally real preperiodic points} 
	We review necessary background in archimedean and nonarchimedean potential theory. Let $K$ be a number field.  Then $K_v$ is the completion of $K$ at each place $v\in M_K$. Denote by $\mathbb{C}_v$  the completion of an algebraic closure $\overline{K}_v$ of $K_v$. An adelic set  $\mathbb{E}=\prod_v E_v$ is a collection of bounded sets $\{E_v\}_{v\in M_K}$, where $E_v=\mathcal{O}_v$ (ring of integers of $\mathbb{C}_v$) for almost all $v$.  In this article, our absolute values $|\cdot|_v$ coincide with usual real and $p$-adic absolute values when restricted to $\mathbb{Q}$ and this normalization is the same as described in \cite[ $\S2$]{BH}.   Then we define the $n$-diameter of $E_v$ at each place $v$ of $K$ to be 
	$$d_n(E_v)=\sup_{x_1,...,x_n\in E_v}\prod_{i\neq j}|x_i-x_j|_v^{\frac{1}{n(n-1)}}.$$ The quantity $d_n(E_v)$ describes the supremum of geometric mean of pairwise distance among $n$-point in the bounded set $E_v$. 
	This sequence $\{d_n(E_v)\}_{n\geq 2}$ is monotone decreasing and the limit exists, see \cite[$\S 4$]{BH}. The limit  is known as the transfinite diameter of $E_v$
	$$d_{\infty}(E_v)=\lim_{n\rightarrow\infty}d_n(E_v).$$  Thus the transfinite diameter of an adelic set $\mathbb{E}=\prod_v E_v$ is given by 
	$$d_{\infty}(\mathbb{E}):=\prod_v d_{\infty}(E_v)^{N_v}$$ where the numbers $N_v=[K_v:\mathbb{Q}_v]\geq 1$ are local degrees. Note that the infinite product is indeed finite because almost all $d_{\infty}(E_v)=1$, see \cite[Example 5.2.16]{Rum}.\\
	
	\noindent For each place $v$ of $K$, the $v$-adic filled Julia set of $f_c$, denoted by $\mathcal{K}_{\mathbb{C}_v}(f_c)$, is 
	$$\mathcal{K}_{\mathbb{C}_v}(f_c):=\{ x\in \mathbb{C}_v \mid  \text{the forward $f_c$-orbit of \,$x\,$ is bounded with respect to}\,\,|\cdot|_v\}.$$   With this notation, Baker and Hsia \cite[Theorem 4.1]{BH} showed that the transfinite  diameter of $v$-adic filled Julia set of the monic polynomial $f_c$ is 
	\begin{equation}d_{\infty}(\mathcal{K}_{\mathbb{C}_v}(f_c))=1\label{capvadicfilled}\end{equation} for all $v\in M_K.$ 
	The capacity of many bounded sets over $\mathbb{C}$ are known. For example, Ransford \cite[Corollary 5.4]{Rans} shows that the capacity of a real segment is one-quarter of its length. \\
	
	\indent The following adelic Fekete-type theorem is due to Baker-Hsia \cite[Proposition 6.1]{BH} (cf. Cantor \cite[Theorem 5.2.1]{Can} and Rumely \cite[Theorem 6.3.1]{Rum}).
	\begin{prop} \label{adelicfekete} Let $K$ be a number field and let $\mathbb{E}=\prod_v E_v$ be an adelic set so that $d_{\infty}(\mathbb{E})<1$. Then there are only finitely many algebraic numbers having all Galois conjugates contained in $E_v$ for all $v\in M_K$.
	\end{prop}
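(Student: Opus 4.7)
The plan is to argue by contradiction: suppose there exist infinitely many distinct algebraic numbers $\alpha_1, \alpha_2, \ldots \in \overline{K}$ whose complete set of Galois conjugates over $K$ lies in $E_v$ at every place $v \in M_K$. For each positive integer $N$, let $\{\beta_1, \ldots, \beta_n\}$ denote the union of the Galois conjugates of $\alpha_1, \ldots, \alpha_N$, listed without repetition; this set is stable under $\mathrm{Gal}(\overline{K}/K)$, and its cardinality $n = n(N)$ can be made arbitrarily large by increasing $N$.

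The main object to analyze is the Vandermonde-type product
$$\Delta_n = \prod_{i \neq j}(\beta_i - \beta_j),$$
which is a nonzero element of $K$ because the $\beta_i$ are distinct and the product is symmetric, hence fixed by $\mathrm{Gal}(\overline{K}/K)$. The product formula for $K$ in the normalization fixed in $\S 3$ then gives
$$\prod_{v \in M_K} |\Delta_n|_v^{N_v} = 1.$$
On the other hand, since $\{\beta_1, \ldots, \beta_n\} \subseteq E_v$ at every place, the definition of the $n$-diameter yields
$$|\Delta_n|_v = \prod_{i \neq j} |\beta_i - \beta_j|_v \;\leq\; d_n(E_v)^{n(n-1)}.$$
Raising to the $N_v$-th power and multiplying over $v$ would therefore give $1 \leq d_n(\mathbb{E})^{n(n-1)}$, where $d_n(\mathbb{E}) := \prod_v d_n(E_v)^{N_v}$. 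Since $n$ can be taken arbitrarily large, letting $n \to \infty$ and invoking the monotone convergence $d_n(E_v) \searrow d_\infty(E_v)$ would force $d_\infty(\mathbb{E}) \geq 1$, contradicting the hypothesis $d_\infty(\mathbb{E}) < 1$.

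The main technical obstacle is justifying passage to the limit through the infinite product over $v$, namely $\prod_v d_n(E_v)^{N_v} \to \prod_v d_\infty(E_v)^{N_v}$. This is where the adelic hypothesis that $E_v = \mathcal{O}_v$ for almost all $v$ enters crucially: at such places one has $d_n(E_v) = 1$ for every $n$, so all but finitely many factors are identically $1$ and the passage to the limit reduces to a finite product, where termwise convergence is immediate. A secondary bookkeeping point is to check that the normalization of the local absolute values $|\cdot|_v$ and of the local degrees $N_v = [K_v:\mathbb{Q}_v]$ is consistent between the product formula and the definition of $d_\infty(\mathbb{E})$; in the conventions recorded at the start of $\S 3$ (which match those of Baker--Hsia) this compatibility is built in, so no adjustment is needed.
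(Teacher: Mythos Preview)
The paper does not actually supply its own proof of this proposition: it is stated as a citation of Baker--Hsia \cite[Proposition~6.1]{BH} (with parallel references to Cantor and Rumely), and then used as a black box in the proofs of Theorems~\ref{mainmain} and~\ref{totpadictri}. So there is no ``paper's proof'' to compare against here.

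That said, your argument is the standard one and is essentially correct. The key steps---Galois-invariance of $\Delta_n$ placing it in $K^\times$, the product formula, the local bound $|\Delta_n|_v \le d_n(E_v)^{n(n-1)}$, and the reduction of the limit to a finite product via $d_n(\mathcal{O}_v)=1$ at the cofinite set of places where $E_v=\mathcal{O}_v$---are all sound. One small point worth making explicit: to evaluate $|\Delta_n|_v$ as $\prod_{i\ne j}|\beta_i-\beta_j|_v$ you are implicitly choosing an embedding $\overline{K}\hookrightarrow\mathbb{C}_v$ extending $v$; since $\Delta_n\in K$ the value is independent of that choice, and the hypothesis that \emph{all} conjugates lie in $E_v$ guarantees the images of the $\beta_i$ land in $E_v$ for any such embedding, so the bound by $d_n(E_v)$ is legitimate. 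With that caveat noted, the proof is complete.
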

	We are now  ready to prove our main result for this section, and it is restated for the reader's convenience.
	\begin{theorem} \label{mainmain} Let $c$ be a rational number and let $f_c(x)=x^2+c.$
		\begin{enumerate}
			\item[(a)] If $c>\frac{1}{4}$, then $\mathrm{PrePer}(f_c)\cap \mathbb{Q}^{\mathrm{tr}}$ is empty.
			\item[(b)] If $-2<c\leq \frac{1}{4}$, then $\mathrm{PrePer}(f_c)\cap \mathbb{Q}^{\mathrm{tr}}$ is nonempty and finite.
			\item[(c)] If $c\leq -2$ then $\mathrm{PrePer}(f_c)\subseteq \mathbb{Q}^{\mathrm{tr}}$; in particular, $\mathrm{PrePer}(f_c)\cap \mathbb{Q}^{\mathrm{tr}}$ is infinite.
		\end{enumerate}
	\end{theorem}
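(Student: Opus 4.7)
The plan is to dispatch the three cases using Propositions~\ref{filledJuliatrichotomy} and~\ref{padicfjs} together with the adelic Fekete theorem (Proposition~\ref{adelicfekete}), reserving the substantive work for part~(b).

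Part~(a) reduces at once to Proposition~\ref{filledJuliatrichotomy}(i): any preperiodic point $\alpha\in\mathbb{R}$ has bounded forward orbit and so must lie in $\mathcal{K}_{\mathbb{R}}(f_c)$, which is empty when $c>\tfrac14$. Part~(c) reduces similarly to Proposition~\ref{filledJuliatrichotomy}(iii). The set $\mathrm{PrePer}(f_c)$ is Galois-stable, so every conjugate of a preperiodic $\alpha$ is again preperiodic and hence lies in $\mathcal{K}_{\mathbb{C}}(f_c)\subseteq[-a_c,a_c]\subset\mathbb{R}$; every conjugate of $\alpha$ is therefore real, which is the definition of totally real. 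Infinitude of $\mathrm{PrePer}(f_c)$ is classical (the equation $f_c^n(x)=x$ has degree $2^n$, giving $2^n+O(1)$ points of exact period $n$).

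For part~(b), non-emptiness is immediate: the fixed point $a_c=(1+\sqrt{1-4c})/2$ is either rational or a root of the totally real quadratic $x^2-x+c$, so $a_c\in\mathrm{PrePer}(f_c)\cap\mathbb{Q}^{\mathrm{tr}}$. For finiteness, my plan is to apply Proposition~\ref{adelicfekete} over $K=\mathbb{Q}$ with the adelic set
\[
\mathbb{E} \;=\; [-a_c,a_c]\;\times\!\!\prod_{v\text{ finite}}\!\mathcal{K}_{\mathbb{C}_v}(f_c).
\]
Ransford's formula for a real segment gives the archimedean transfinite diameter $d_\infty([-a_c,a_c])=a_c/2$, and the hypothesis $-2<c\leq\tfrac14$ is equivalent to $\tfrac12\leq a_c<2$, so this factor is strictly less than $1$. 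By Proposition~\ref{padicfjs} each finite factor is bounded, and equals the unit disk $\mathcal{O}_v$ whenever $|c|_v\leq 1$ (i.e.\ for all but finitely many $v$), so $\mathbb{E}$ is a legitimate adelic set; by \eqref{capvadicfilled} each finite factor contributes capacity $1$. Hence $d_\infty(\mathbb{E})=a_c/2<1$. A totally real preperiodic $\alpha$ has all Galois conjugates landing in $E_v$ for every $v$---archimedean by total reality together with $\mathcal{K}_{\mathbb{R}}(f_c)=[-a_c,a_c]$, nonarchimedean by Galois-stability of preperiodicity under any embedding $\overline{\mathbb{Q}}\hookrightarrow\mathbb{C}_v$---and Proposition~\ref{adelicfekete} then supplies finiteness.

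The main obstacle I anticipate is not conceptual but a subtlety worth emphasizing: because $c$ need not be an algebraic integer, one cannot simply take $E_v=\mathcal{O}_v$ at every finite place. The filled Julia set $\mathcal{K}_{\mathbb{C}_v}(f_c)$, which is bounded but potentially strictly larger than $\mathcal{O}_v$ at primes dividing the denominator of $c$, is exactly what is needed so that conjugates of $\alpha$ land in $E_v$ at those bad primes, while the identity $d_\infty(\mathcal{K}_{\mathbb{C}_v}(f_c))=1$ keeps the adelic capacity strictly below $1$ precisely in the range $-2<c\leq\tfrac14$.
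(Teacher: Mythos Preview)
Your proof is correct and follows essentially the same approach as the paper: Proposition~\ref{filledJuliatrichotomy} for parts (a) and (c), and the adelic Fekete argument with $\mathbb{E}=[-a_c,a_c]\times\prod_{v\text{ finite}}\mathcal{K}_{\mathbb{C}_v}(f_c)$ for the finiteness in part~(b). You in fact supply two details the paper's proof leaves implicit---the nonemptiness in (b) via the totally real fixed point $a_c$, and the infinitude of $\mathrm{PrePer}(f_c)$ in (c)---and your remark on why the filled Julia sets (rather than $\mathcal{O}_v$) are needed at primes dividing the denominator of $c$ is a welcome clarification.
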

	\begin{proof}
		Part (a)  follows immediately from Proposition \ref{filledJuliatrichotomy}(i). 
	 For each $c\in (-2,\frac{1}{4}]$, totally real preperiodic points of $f_c$ are algebraic numbers and together with all of their Galois conjugates belonging to the interval $[-a_c,a_c]$ where $a_c=\frac{1}{2}+\frac{\sqrt{1-4c}}{2}$ is the quantity defined in Proposition \ref{filledJuliatrichotomy}.  Consider the  adelic set $\mathbb{E}=\{E_v\}_v$ where $$E_v=\begin{cases}\mathcal{K}_{\mathbb{R}}(f_c)&\mbox{if}\,\,v=\infty\\ \mathcal{K}_{\mathbb{C}_v}(f_c)&\mbox{if}\,\,v\neq \infty\end{cases}$$ The transfinite diameter of the adelic  set $\mathbb{E}$ is 
			$$d_{\infty}(\mathbb{E})=d_{\infty}([-a_c,a_c])\prod_{v\neq\infty }d_{\infty}(\mathcal{K}_{\mathbb{C}_v}(f_c))=\frac{a_c}{2}\prod_{v\neq\infty }d_{\infty}(\mathcal{K}_{\mathbb{C}_v}(f_c)).$$ Note that  $d_{\infty}(\mathcal{K}_{\mathbb{C}_v}(f_c))=1$ at all finite places $v$ by (\ref{capvadicfilled}). Since $\frac{a_c}{2}<1$ when $c\in (-2,\frac{1}{4}]$, this means that $d_{\infty}(\mathbb{E})<1$. Applying Proposition \ref{adelicfekete}, there are only finitely many totally real algebraic numbers having all of its conjugates in $[-a_c,a_c]$ at the place $v=\infty$ and $\mathcal{K}_{\mathbb{C}_v}(f_c)$ for all finite places $v.$ Hence the finiteness of $\mathrm{PrePer}(f_c)\cap \mathbb{Q}^{\mathrm{tr}}$ follows. \\
		\indent It remains to verify (c). For $c\leq -2$, it can be deduced from Proposition \ref{filledJuliatrichotomy}(iii) that all preperiodic points of $f_c$ are totally real algebraic numbers. That is, it is true because if $\alpha$ is $f_c$-preperiodic then so are all of its conjugates, and  $$\mathrm{PrePer}(f_c)\subseteq \mathcal{K}_{\mathbb{C}}(f_c)\subseteq \mathbb{R}.$$ 
		Therefore $\mathrm{PrePer}(f_c)\cap \mathbb{Q}^{\mathrm{tr}}$ is infinite.
	\end{proof}
\section{Quantitative Calculations}
  Let $L$ be a complete field with respect to an absolute value $|\cdot|$. 
Recall the  group of affine automorphisms $\phi : \mathbb{A}^1\rightarrow\mathbb{A}^1$ defined over $L$ is 
	$$\mathrm{Aff}(L):=\{ax+b\mid  a,b\in L,\,\,a\neq0\}.$$
For each  $f(x)\in L[x]$ and $\phi \in\mathrm{Aff}(L)$, we define $f^{\phi}:=\phi^{-1}\circ f\circ \phi$.  Then we say $f$ and $g$ are conjugate if $g=f^{\phi}$ for some $\phi\in \mathrm{Aff}(L).$

\noindent Recall the filled Julia set of $f(x)\in L[x]$ of degree $\geq 2$  is 
$$\mathcal{K}_{L}(f)=\{x\in L\mid  \text{the forward orbit of \,\,$x$\,\,is bounded with respect to}\,\,|\cdot|\}.$$ The following proposition is straightforward and follows from the above definitions.
\begin{prop} \label{changeofcoordinates} Let  $\phi\in \mathrm{Aff}(L)$ and $f(x)\in L[x]$. Then the following statements hold :
	\begin{enumerate}\item[(i)] $\alpha$ is a preperiodic point of $f$ if and only if $\phi^{-1}(\alpha)$ is a preperiodic point of $f^{\phi}.$ 
		\item[(ii)]  $\mathcal{K}_L(f^{\phi})=\phi^{-1}(\mathcal{K}_L(f)).$
	\end{enumerate}
\end{prop}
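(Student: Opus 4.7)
The plan is to reduce both parts to the single conjugation identity
\[
(f^{\phi})^n = \phi^{-1}\circ f^n \circ \phi
\]
for every integer $n\geq 0$, which I will establish by a quick induction on $n$. The base case is the definition $\phi^{-1}\circ\phi = \mathrm{id}$, and for the inductive step I will write $(f^{\phi})^{n+1} = (\phi^{-1}\circ f\circ \phi)\circ(\phi^{-1}\circ f^{n}\circ \phi)$ and cancel the middle $\phi\circ \phi^{-1}$.

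Once this identity is in hand, part (i) follows at once. Setting $\beta = \phi^{-1}(\alpha)$, the identity gives $(f^{\phi})^n(\beta) = \phi^{-1}(f^n(\alpha))$, and since $\phi^{-1}$ is a bijection of $L$, the forward $f^{\phi}$-orbit of $\beta$ is finite precisely when the forward $f$-orbit of $\alpha$ is finite. For part (ii) I will proceed along the same lines, observing that $\beta \in \mathcal{K}_L(f^{\phi})$ iff the sequence $\{\phi^{-1}(f^n(\phi(\beta)))\}_{n\geq 0}$ is bounded in $L$. Writing $\phi(x)=ax+b$ with $a\neq 0$, the estimate $|\phi^{-1}(y)|\leq (|y|+|b|)/|a|$, and its analogue for $\phi$, show that both $\phi$ and $\phi^{-1}$ send bounded sets to bounded sets; this is valid uniformly in the archimedean and nonarchimedean settings. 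Consequently the above sequence is bounded iff $\{f^n(\phi(\beta))\}_{n\geq 0}$ is bounded, which is to say $\phi(\beta)\in \mathcal{K}_L(f)$, i.e., $\beta\in\phi^{-1}(\mathcal{K}_L(f))$.

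There is essentially no obstacle here: the proposition is a formal bookkeeping statement about how iteration interacts with affine conjugation. The only point meriting a sentence of justification is that $\phi$ and $\phi^{-1}$ both preserve bounded subsets of $L$, which I will dispatch using the explicit formula for an affine automorphism together with the (strong) triangle inequality.
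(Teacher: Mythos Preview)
Your proof is correct. The paper does not actually supply a proof of this proposition, remarking only that it ``is straightforward and follows from the above definitions''; your argument via the conjugation identity $(f^{\phi})^n=\phi^{-1}\circ f^n\circ\phi$ and the observation that affine automorphisms preserve bounded sets is exactly the routine unpacking the paper has in mind.
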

\indent Note that preperiodic points of $f_c$ when $c\in\mathbb{Q}$ are not always algebraic integers.  Thus we need to make a suitable change of coordinates of $f_c$ to $f^{\phi}_c$ for some $\phi\in \mathrm{Aut}(\mathbb{C})$ so that all preperiodic points of $f^{\phi}_c$ are algebraic integers due to the following Lemma.
	\begin{lemma}\label{coorchangetoalgint} Let $a$ and $b$ be relatively prime integers and $b\neq 0$. Let $f(x)=x^2+\frac{a}{b}$ and $\phi(x)=b^{-1/2}x$. Then all preperiodic points for $f^{\phi}(x)=b^{-1/2}(x^2+a)$ are algebraic integers.
\end{lemma}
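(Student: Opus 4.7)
My plan is to pass through the coordinate change $\phi$ and reduce the claim to a statement about the $v$-adic filled Julia sets of $f(x)=x^2+a/b$ at every finite place. First I would use Proposition \ref{changeofcoordinates}(i) to observe that $\alpha$ is preperiodic for $f^{\phi}$ precisely when $\beta:=\phi(\alpha)=b^{-1/2}\alpha$ is preperiodic for $f$, and in particular $\beta\in\mathcal{K}_{\mathbb{C}_v}(f)$ at every finite place $v$ of $\overline{\mathbb{Q}}$. Thus the whole problem reduces to showing that $|\alpha|_v\le 1$ for every finite $v$, which is the usual criterion that $\alpha$ be an algebraic integer.

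Next I would fix an embedding $\overline{\mathbb{Q}}\hookrightarrow\mathbb{C}_v$ above a rational prime $p$ and split the analysis according to whether $p$ divides $b$. When $p\nmid b$, coprimality gives $|a/b|_v\le 1$, so Proposition \ref{padicfjs}(i) produces $|\beta|_v\le 1$; together with $|b^{1/2}|_v=|b|_v^{1/2}=1$ this immediately yields $|\alpha|_v\le 1$. When $p\mid b$, coprimality forces $|a|_v=1$ and $|a/b|_v=|b|_v^{-1}>1$, so Proposition \ref{padicfjs}(ii) pins $|\beta|_v$ down to $|b|_v^{-1/2}$, and multiplying by $|b^{1/2}|_v=|b|_v^{1/2}$ cancels exactly to give $|\alpha|_v=1$.

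The main subtle point, such as it is, lies in the second case: one really needs the exact equality $|\beta|_v^2=|a/b|_v$ on the filled Julia set from Proposition \ref{padicfjs}(ii), not just an upper bound, because a weaker inequality would be insufficient to cancel the factor $|b|_v^{1/2}$ coming from $\phi$. The clean cancellation $|b|_v^{1/2}\cdot|b|_v^{-1/2}=1$ is in fact the reason the conjugating map was chosen to be $\phi(x)=b^{-1/2}x$: it is designed precisely to absorb the denominator at the primes dividing $b$. Once the two cases are assembled, $|\alpha|_v\le 1$ at every finite $v$, and hence $\alpha\in\overline{\mathbb{Z}}$.
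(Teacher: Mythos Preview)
Your proposal is correct and follows essentially the same approach as the paper: both split into the cases $p\nmid b$ and $p\mid b$ and invoke Proposition~\ref{padicfjs}(i) and (ii), respectively, to control the $p$-adic size of points in the filled Julia set. The only cosmetic difference is that the paper phrases the argument in terms of the set identity $\mathcal{K}_{\mathbb{C}_p}(f^{\phi})=\phi^{-1}(\mathcal{K}_{\mathbb{C}_p}(f))$ from Proposition~\ref{changeofcoordinates}(ii), whereas you track a single preperiodic point via Proposition~\ref{changeofcoordinates}(i); one small quibble is that your ``subtle point'' overstates matters, since the upper bound $|\beta|_v\le |b|_v^{-1/2}$ would already suffice (multiplication by $|b|_v^{1/2}<1$ only helps), though Proposition~\ref{padicfjs}(ii) does happen to give equality.
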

\begin{proof} Let $p$ be any rational prime number and let $(\mathbb{C}_p,|\cdot|)$ be a complete nonarchimedean field with respect to $|\cdot|.$    We want to show that all preperiodic points of $f_c^{\phi}$ are in the ring of integer $\mathcal{O}_p$. If $p$ does not divide $b$, then we have $\phi^{-1}(\mathcal{O}_p)=\mathcal{O}_p$ and \begin{align*}\mathcal{K}_{\mathbb{C}_p}(f^{\phi})&=\phi^{-1}(\mathcal{K}_{\mathbb{C}_p}(f_c))\quad (\text{using  Proposition\,\,\,\ref{changeofcoordinates}(ii)})\\&=\phi^{-1}(\mathcal{O}_p)\quad (\text{using Proposition\,\,\,\ref{padicfjs}(i)})\\&=\mathcal{O}_p.\end{align*}  It remains to treat the case when $p$  divides $b$. Then we have
	\begin{align*}
 \mathcal{K}_{\mathbb{C}_p}(f^{\phi})&=\phi^{-1}(\mathcal{K}_{\mathbb{C}_p}(f))\quad (\text{using Proposition\,\,\,\ref{changeofcoordinates}(ii)})\\
		&\subseteq \phi^{-1}(\{x\in \mathbb{C}_p \mid |x|=|b|^{-1/2}\})\quad (\text{using Proposition\,\,\,\ref{padicfjs}(ii)})\\
		&\subseteq \{x\in \mathbb{C}_p \mid |b^{-1/2}x|=|b|^{-1/2}\}\quad(\text{by definition of}\,\,\,\phi)\\
		&\subseteq \mathcal{O}_p
	\end{align*}
It follows that $\mathcal{K}_{\mathbb{C}_p}(f_c)\subseteq \mathcal{O}_p$ for all primes $p$.
	Therefore every $\alpha\in \mathrm{PrePer}(f_c^{\phi})$ is an algebraic integer.
\end{proof}
Recall that theorem 8 in  \cite{NP} provides a convenient method to bound the degree of algebraic integers having all of its conjugates lying in an interval of length less than $4$. This tool is useful in our computation because we only have to search for low degree algebraic integers in the interval.
With the strategy described, we are able to calculate the set $\mathrm{PrePer}(f_c)\cap \mathbb{Q}^{\mathrm{tr}}$ for $c\in \{-1,0,\frac{1}{5}, \frac{1}{4}\}$ as follows. 
	\begin{theorem} \label{quantitativecomp} It holds that
		\begin{enumerate}
			\item[(a)] $\mathrm{PrePer}(x^2)\cap \mathbb{Q}^{\mathrm{tr}}=\{\pm 1,0\}$.
			\item[(b)] $\mathrm{PrePer}\left(x^2+\frac{1}{4}\right)\cap \mathbb{Q}^{\mathrm{tr}}=\left\{\pm\frac{1}{2}\right\}$.
			\item[(c)] $\mathrm{PrePer}(x^2-1)\cap \mathbb{Q}^{\mathrm{tr}}=\left\{0,\pm1,\pm\sqrt{2}, \frac{-1\pm\sqrt{5}}{2}, \frac{1\pm\sqrt{5}}{2}\right\}$.
			\item[(d)] $\mathrm{PrePer}\left(x^2+\frac{1}{5}\right)\cap \mathbb{Q}^{\mathrm{tr}}=\left\{ \frac{-1\pm\sqrt{5}}{2\sqrt{5}}, \frac{1\pm\sqrt{5}}{2\sqrt{5}}\right\}$.
		\end{enumerate}
	\end{theorem}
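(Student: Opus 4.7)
The plan is to handle the easy cases (a) and (b) by direct dynamical arguments on the real line, and to reduce the harder cases (c) and (d) to a finite enumeration of low-degree totally real algebraic integers in a short real interval, using Lemma~\ref{coorchangetoalgint} together with the degree bound \cite[Theorem~8]{NP}.

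For (a), the preperiodic points of $f_0(x)=x^2$ in $\overline{\mathbb{Q}}$ consist of $0$ together with all roots of unity, and the only totally real roots of unity are $\pm 1$, giving $\{0,\pm 1\}$ immediately. For (b), the polynomial $f_{1/4}$ has a unique parabolic fixed point at $1/2$, and Proposition~\ref{filledJuliatrichotomy}(ii) gives $\mathcal{K}_{\mathbb{R}}(f_{1/4})=[-1/2,1/2]$. A short monotonicity argument (using $f_{1/4}(x)-x=(x-1/2)^2$) shows that the forward orbit of any real $x\in[-1/2,1/2]\setminus\{\pm 1/2\}$ is eventually strictly increasing and converges to $1/2$ without ever reaching it, so the only real preperiodic points are $\pm 1/2$. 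Because the Galois conjugates of a totally real preperiodic point are themselves real preperiodic points lying in $[-1/2,1/2]$, each conjugate equals $\pm 1/2$, forcing the point itself to be $\pm 1/2$.

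For (c) and (d), I apply Lemma~\ref{coorchangetoalgint} with $\phi=\mathrm{id}$ when $c=-1$ and with $\phi(x)=x/\sqrt{5}$ when $c=1/5$. A totally real preperiodic point of $f_c^{\phi}$ is then a totally real algebraic integer whose Galois conjugates all lie in the interval $I:=\phi^{-1}([-a_c,a_c])$, which in both cases simplifies to $[-(1+\sqrt{5})/2,(1+\sqrt{5})/2]$ of length $1+\sqrt{5}<4$. This is the setting where \cite[Theorem~8]{NP} produces an explicit upper bound on the degrees of such algebraic integers, reducing the problem to a finite enumeration. I would then list totally real algebraic integers of each permitted degree in $I$ (degree $1$ yielding $\{0,\pm 1\}$, degree $2$ yielding $\pm\sqrt{2}$ and $(\pm 1\pm\sqrt{5})/2$, and so on) and test each candidate for preperiodicity by iterating $f_c^{\phi}$ until either the orbit closes into a cycle or an iterate escapes $I$. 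Applying $\phi$ to the surviving preperiodic points recovers the stated lists.

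The main obstacle will be the enumeration-and-verification step: even with the \cite[Theorem~8]{NP} bound in hand, there are potentially many higher-degree totally real algebraic integers in $I$ that must be ruled out individually, and the verification requires finite but careful computation. Part (d) carries an extra wrinkle because $f_{1/5}^{\phi}(x)=(x^2+1)/\sqrt{5}$ has coefficients in $\mathbb{Z}[\sqrt{5}]$: a candidate $\beta$ must also have its entire forward orbit consist of algebraic integers. This additional integrality constraint quickly eliminates otherwise innocent candidates such as $\beta=0$, whose image $1/\sqrt{5}$ fails to be integral at the prime above $5$.
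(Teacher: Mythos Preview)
Your proposal is sound and lands on the same overall strategy as the paper for parts (a), (c), and (d): pass via Lemma~\ref{coorchangetoalgint} so that totally real preperiodic points become totally real algebraic integers in the interval $I=\bigl[-(1+\sqrt{5})/2,(1+\sqrt{5})/2\bigr]$, invoke \cite[Theorem~8]{NP} for a degree bound, enumerate, and test. The paper carries out the \cite[Theorem~8]{NP} computation explicitly (obtaining degree $<7$) and then, rather than enumerating degrees $3$ through $6$ directly, appeals to Robinson's tables \cite{Rob} to see that no totally real algebraic integer of those degrees has all conjugates in $I$, collapsing the search to degree $\le 2$. This is precisely the shortcut that resolves the ``main obstacle'' you flagged, so you should incorporate it. For (d), your elimination of $0,\pm 1,\pm\sqrt{2}$ via non-integrality of a forward iterate is equivalent to the paper's phrasing that these points miss the $5$-adic filled Julia set of $h^{\phi}$.

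Your treatment of (b) is genuinely different. The paper conjugates by $\phi(x)=x/2$ to $f^{\phi}(x)=(x^2+1)/2$, observes that totally real preperiodic points of $f^{\phi}$ are algebraic integers in $[-1,1]$ (length $2<\sqrt{5}$), applies \cite[Corollary~9]{NP} to force them to be rational integers $\{-1,0,1\}$, and discards $0$ via the $2$-adic filled Julia set. Your direct monotone-convergence argument on $[-1/2,1/2]$ (using $f_{1/4}(x)-x=(x-1/2)^2$) bypasses both the coordinate change and any appeal to \cite{NP}, and is cleaner for this particular parameter; the paper's route, on the other hand, is uniform with its handling of (c) and (d) and showcases the $p$-adic elimination mechanism in its simplest instance.
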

\begin{remark} It is interesting to point out that  parameters in the set $\{-1,0,\frac{1}{5},\frac{1}{4}\}$ are not the only rational numbers for which the corresponding real part of the filled Julia set of the conjugate map $f^{\phi}_c$ has length less than $4$. The rational parameters $c=-\frac{1}{2}$ and $c=\frac{1}{6}$ also have this property, but the calculation in searching algebraic integers in $\mathcal{K}(f^{\phi}_c)\cap\mathbb{R}$ becomes more challenging. More precisely, \cite[Theorem 8]{NP} asserts that the real segment $\mathcal{K}(f^{\phi}_c)\cap\mathbb{R}$ contains algebraic integers of degree at most $97$ when $c\in\{-\frac{1}{2},\frac{1}{6}\}.$ For additional rational parameters in $(-2,\frac{1}{4}]$, the length of $\mathcal{K}(f^{\phi}_c)\cap\mathbb{R}$ is either $4$ or greater than $4$ and we do not have a known tool to explicitly calculate  $\mathrm{PrePer}(f_c)\cap \mathbb{Q}^{\mathrm{tr}}$ in this situation.
\end{remark}
	\begin{proof}
	Part (a) is simple. The only preperiodic points of the squaring map are $0$ and roots of unity. It is also clear that $\pm1$ are the only totally real roots of unity. \\
	\indent In part (b), let $\phi(x)=\frac{1}{2}x$. As in Lemma \ref{coorchangetoalgint}, we work with the conjugate map $f^{\phi}(x)=\frac{1}{2}(x^2+1)$ of $f(x):=f_{1/4}(x)=x^2+\frac{1}{4}.$ Thus the real segment of the filled Julia set of $f^{\phi}$ is $[-1,1]$ by Proposition \ref{changeofcoordinates}(ii) and totally real preperiodic points of $f^{\phi}$ belonging to this interval. Moreover, all preperiodic points of $f^{\phi}$ are algebraic integers by Lemma \ref{coorchangetoalgint}. Then applying Corollary $9$ in \cite{NP} (which states that algebraic integers in an interval of length $<\sqrt{5}$ are integers), it follows that $-1,0,$ and $1$ are the only algebraic integers all of whose conjugates lie in the interval  $[-1,1]$. But, $0$ is not preperiodic for $f^{\phi}$ because it does not belong to the $2$-adic filled Julia set of $f^{\phi}$. This means the only totally real preperiodic points of $f^{\phi}$ are $\pm1.$ Therefore $\pm\frac{1}{2}$ are the only totally real preperiodic points of $f$ by Proposition \ref{changeofcoordinates}(i).\\
 \indent To compute  part (c), we let $I$ denote the real segment of the filled Julia set of $g(x):=f_{-1}(x)=x^2-1$. Then 
			$$I:=\left[-\frac{1+\sqrt{5}}{2},\frac{1+\sqrt{5}}{2}\right].$$
			Let $\theta$ be  a totally real preperiodic point of $g$. We want to show that $[\mathbb{Q}(\theta):\mathbb{Q}]<7$. To see this, we apply Theorem 8 in\cite{NP} with 
			\begin{align*}
				a_n&=d_n\left(\left[-\frac{1+\sqrt{5}}{2},\frac{1+\sqrt{5}}{2}\right]\right)^{n(n-1)}=(1+\sqrt{5})^{n(n-1)}D_n\\
				b_n&=\frac{n^{2n}}{n!^2}
			\end{align*}
			The sequence $\{D_n\}$ is related to Jacobi polynomials and defined recursively in \cite[Theorem 2]{NP}. Then we calculate

			\begin{center}\begin{tabular}{ llll}
					$D_2=1$&$D_3=\frac{1}{16}$&$D_4=\frac{1}{3125}$&$D_5=\frac{27}{210827008}$\\
					$D_6=\frac{1}{259356749904}$&$D_7=\frac{3125}{368036388053733408768}$&$D_8=\frac{3125}{2333172671504650870750167741}$
			\end{tabular}\end{center}
	and thus 
			\begin{center}\begin{tabular}{ ll}
			$a_2=(1+\sqrt{5})^{2}D_2\approx 10.472...$&  $a_6=(1+\sqrt{5})^{30}D_6\approx 7702.496...$\\
			$a_3=(1+\sqrt{5})^{6}D_3\approx 71.777... $& $a_7=(1+\sqrt{5})^{42}D_7\approx22371.750...$\\
			$a_4=(1+\sqrt{5})^{12}D_4\approx 422.047...$& $a_8=(1+\sqrt{5})^{56}D_8\approx 48740.586...$\\
			$a_5=(1+\sqrt{5})^{20}D_5\approx2031.375...$
	\end{tabular}\end{center}
			Also, we compute 
			\begin{center}\begin{tabular}{ llll}
					$b_2=4$&$b_3=20.25$&$b_4\approx 113.777$&$b_5\approx 678.168$\\
					$b_6=4199.04$&$b_7\approx 26700.013$&$b_8\approx 173140.530$
			\end{tabular}\end{center}
			This yields $a_7<b_7$ and 
			$$\frac{a_8}{a_7}\approx 2.178...<\frac{b_8}{b_7}\approx6.484...$$ 
			Thus Theorem 8 in \cite{NP} is applied with $n_0=7$ and so $[\mathbb{Q}(\theta): \mathbb{Q}]<7$.    Using  Robinson's result \cite{Rob} (in which he classified, up to degree $8$, algebraic integers having all Galois conjugates containing in an interval of length less than $4$), it is impossible that $3\leq [\mathbb{Q}(\theta):\mathbb{Q}]\leq 6$ because there is a zero of the minimal polynomial satisfied by $\theta$ that does not belong to $I$. Thus the real interval $I$ contains algebraic integers of degree at most $2$. Again, using Table $1$ in \cite{Rob}, we obtain algebraic integers of degree $\leq 2$ having all of its conjugates in $I$ and so
			\begin{equation}\mathrm{PrePer}(g)\cap \mathbb{Q}^{\mathrm{tr}}\subseteq \left\{0,\pm1,\pm\sqrt{2}, \frac{-1\pm\sqrt{5}}{2}, \frac{1\pm\sqrt{5}}{2}\right\}.\label{preperbasilica}\end{equation} It is elementary to check that  $9$ algebraic integers in the set on the right side of (\ref{preperbasilica}) are $g$-preperiodic.  This finishes the computation in part (c).\\
	\indent The idea in part (d) is (b) and (c) combined.  Let $\phi(x)=\frac{1}{\sqrt{5}}x$. After making a suitable change of coordinates of $h(x):=f_{1/5}(x)=x^2+\frac{1}{5}$ to $h^{\phi}(x)=\frac{1}{\sqrt{5}}(x^2+1)$, we have the real segment of the filled Julia set of $h^{\phi}$ is $I$ (as denoted in part (c)) by Proposition \ref{changeofcoordinates}(ii). Thus the same argument shown in part (c) yields 
	$$\mathrm{PrePer}(h^{\phi})\cap \mathbb{Q}^{\mathrm{tr}}\subseteq \left\{0,\pm1,\pm\sqrt{2}, \frac{-1\pm\sqrt{5}}{2}, \frac{1\pm\sqrt{5}}{2}\right\}.$$  However, $0, \pm 1,$ and $\pm \sqrt{2}$ do not belong to the $5$-adic filled Julia set of $h^{\phi}$. It is easy to compute that $\frac{-1\pm\sqrt{5}}{2}$ and $\frac{1\pm \sqrt{5}}{2}$ are $h^{\phi}$-preperiodic.
			Applying Proposition \ref{changeofcoordinates}(i), the desired result follows.
	\end{proof}
\section{Totally $p$-adic preperiodic points}
Fix an odd prime $p$.  In \cite[Th$\acute{\text{e}}$or$\grave{\text{e}}$me 1.1]{BBP}, Benedetto-Briend-Perdry  provided a  trichotomy of the $p$-adic filled Julia set. The trichotomy plays a crucial role in understanding totally $p$-adic preperiodic points of $f_c$ in Theorem  \ref{totpadictri}.
\begin{theorem}\label{BBPmainthm} \cite{BBP} Let $p$ be an odd prime and let $\phi(x)=x^2+c$ be a quadratic polynomial defined on $\mathbb{Q}_p$. For $c\in\mathbb{Q}_p$, we have
	\begin{enumerate}
		\item[(i)] If $|c|_p\leq 1$, then $\mathcal{K}_{\mathbb{Q}_p}(f_c)=\{x\in\mathbb{Q}_p \mid |x|_p\leq 1\}$.
		\item[(ii)] If $|c|_p>1$ and $-c$ is not a square in $\mathbb{Q}_p$, then $\mathcal{K}_{\mathbb{Q}_p}(f_c)$ is empty.
		\item[(iii)] If $|c|_p>1$ and $-c$ is a square in $\mathbb{Q}_p$, then $\mathcal{K}_{\mathbb{Q}_p}(f_c)$ is nonempty and is homeomorphic to a Cantor set. Moreover,  $\mathcal{K}_{\mathbb{C}_p}(f_c)=\mathcal{K}_{\mathbb{Q}_p}(f_c)$.
	\end{enumerate}
\end{theorem}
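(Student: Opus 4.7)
The plan is to handle the three cases separately, reducing everything to the norm dichotomy supplied by Proposition \ref{padicfjs} together with Hensel's lemma. Part (i) is immediate: applying Proposition \ref{padicfjs}(i) with $L=\mathbb{Q}_p$ gives exactly the claimed description of $\mathcal{K}_{\mathbb{Q}_p}(f_c)$.

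For part (ii), I would argue by contradiction. Suppose $\alpha\in\mathcal{K}_{\mathbb{Q}_p}(f_c)$; Proposition \ref{padicfjs}(ii) forces $|\alpha|_p^2=|c|_p$, and since $f_c(\alpha)$ also lies in the filled Julia set, either $f_c(\alpha)=0$ (in which case $-c=\alpha^2$ is already a square, a contradiction) or $|f_c(\alpha)|_p=|c|_p^{1/2}$. In the latter case, setting $n=-v_p(c)\geq 1$, one computes $v_p(g(\alpha))=-n/2>-n=2v_p(g'(\alpha))$ for $g(x)=x^2+c$ (using $p$ odd so $v_p(2)=0$), and Hensel's lemma lifts $\alpha$ to an exact root of $g$ in $\mathbb{Q}_p$, again contradicting the hypothesis that $-c$ is not a square.

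For part (iii), fix $\gamma\in\mathbb{Q}_p$ with $\gamma^2=-c$, so $|\gamma|_p=|c|_p^{1/2}>1$. I would first exhibit a $\mathbb{Q}_p$-rational fixed point by writing $1-4c=(2\gamma)^2\bigl(1-\frac{1}{4c}\bigr)$: the second factor is a $1$-unit, hence a square (since $|1/(4c)|_p<1$), so $(1\pm\sqrt{1-4c})/2\in\mathbb{Q}_p$ and $\mathcal{K}_{\mathbb{Q}_p}(f_c)\neq\emptyset$. To produce the Cantor structure, I would analyze preimages: for any $y\in\mathcal{K}_{\mathbb{Q}_p}(f_c)$, the factorization $y-c=\gamma^2(1+y/\gamma^2)$ presents $y-c$ as a square in $\mathbb{Q}_p$ (a square times a $1$-unit), so both preimages $x=\pm\gamma\sqrt{1+y/\gamma^2}$ lie in $\mathbb{Q}_p$, inside two disjoint closed disks of radius strictly less than $|\gamma|_p$ centered at $\pm\gamma$ (disjointness uses $|2\gamma|_p=|\gamma|_p$ for odd $p$). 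A Hensel-type linearization then shows that the preimage of any disk of radius $r<|\gamma|_p$ centered at a point of the filled Julia set consists of two disks of radius $r/|\gamma|_p$, so iterating produces a nested dyadic tree of disjoint $\mathbb{Q}_p$-disks whose radii shrink to $0$; the intersection is a Cantor set inside $\mathbb{Q}_p$, giving both nonemptiness and the Cantor homeomorphism via symbolic coding by $\{0,1\}^{\mathbb{N}}$. Finally, the same preimage computation carried out over $\mathbb{C}_p$ yields the very same $\mathbb{Q}_p$-centered disks; their shrinking nested intersections are single points that, by completeness of $\mathbb{Q}_p$, already lie in $\mathbb{Q}_p$, giving $\mathcal{K}_{\mathbb{C}_p}(f_c)\subseteq\mathbb{Q}_p$ and hence the desired equality.

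The main obstacle I foresee is making the nested-disks construction in (iii) genuinely rigorous: one must verify that the two disks at each stage are disjoint and shrink geometrically, so that the symbolic coding is both well-defined and a homeomorphism onto a Cantor set, and in addition that no $\mathbb{C}_p$-points slip outside the $\mathbb{Q}_p$-disks at any finite stage. Both issues reduce to careful bookkeeping of the Hensel-type linearization of the square root used throughout, after which the remainder of the argument becomes routine.
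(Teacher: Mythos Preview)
The paper does not supply its own proof of this statement: Theorem~\ref{BBPmainthm} is quoted verbatim from Benedetto--Briend--Perdry \cite[Th\'eor\`eme~1.1]{BBP} and used as a black box in the proof of Theorem~\ref{totpadictri}. Part~(i) does coincide with Proposition~\ref{padicfjs}(i) specialized to $L=\mathbb{Q}_p$, but parts~(ii) and~(iii) are nowhere argued in the present article. Consequently there is no in-paper proof to compare your proposal against.

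That said, your sketch is a reasonable reconstruction of the argument. Part~(i) is immediate. Your Hensel step for~(ii) is correct: once $\alpha\in\mathcal{K}_{\mathbb{Q}_p}(f_c)$, Proposition~\ref{padicfjs}(ii) forces $v_p(\alpha)=-n/2$ (so $n$ is even, else the filled Julia set is already empty over $\mathbb{Q}_p$), and since $f_c(\alpha)$ also lies in the filled Julia set one gets $v_p(\alpha^2+c)=-n/2>-n=2v_p(2\alpha)$, whence Hensel produces a $\mathbb{Q}_p$-root of $x^2+c$. For~(iii) your $1$-unit trick correctly shows that all preimages of filled-Julia-set points remain in $\mathbb{Q}_p$, and the shrinking nested-disk picture is exactly the mechanism in \cite{BBP}; the honest work, as you acknowledge, is turning the dyadic tree of disks into a genuine homeomorphism with $\{0,1\}^{\mathbb{N}}$ and checking no $\mathbb{C}_p$-points escape the $\mathbb{Q}_p$-disks at any stage. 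That bookkeeping is precisely what the cited paper carries out, which is why this article invokes the result rather than reproving it.
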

It is known that the transfinite diameter of the ring of integers $\mathbb{Z}_p$ in $\mathbb{Q}_p$ is  less than $1$ and it is computed precisely in \cite[Example 5.2.17]{Rum}
$$d_{\infty}(\mathbb{Z}_p)=p^{-\frac{1}{p-1}}.$$ Combining this fact, Proposition \ref{adelicfekete}, and Theorem \ref{BBPmainthm}, it gives rise to the following trichotomy of the set of totally $p$-adic $f_c$-preperiodic points. Recall that an algebraic number $\alpha$ is totally $p$-adic if its minimal polynomial over $\mathbb{Q}$ has all roots embedded into  $\mathbb{Q}_p.$
\begin{theorem} \label{totpadictri} Fix an odd prime $p$. Let $c$ be a rational number.  
	\begin{enumerate}
		\item[(a)] If $|c|_p\leq 1$, then $\mathrm{PrePer}(f_c)\cap \mathbb{Q}^{\mathrm{tp}}$ is  finite.
		\item[(b)] If $|c|_p>1$  and $-c$ is not a square in $\mathbb{Q}_p$, then $\mathrm{PrePer}(f_c)\cap \mathbb{Q}^{\mathrm{tp}}$ is empty.
		\item[(c)] If $|c|_p>1$  and $-c$ is a square in $\mathbb{Q}_p$, then $\mathrm{PrePer}(f_c)\subset \mathbb{Q}^{\mathrm{tp}}$; in particular, $\mathrm{PrePer}(f_c)$ contains infinitely many totally $p$-adic $f_c$-preperiodic points. 
	\end{enumerate}
\end{theorem}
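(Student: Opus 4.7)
The plan is to mirror the strategy of Theorem \ref{mainmain}, using the $p$-adic filled Julia set trichotomy of Theorem \ref{BBPmainthm} in place of the archimedean trichotomy from Proposition \ref{filledJuliatrichotomy}. Parts (b) and (c) will drop out of Theorem \ref{BBPmainthm} essentially by inspection, while part (a) requires the adelic Fekete machinery of Proposition \ref{adelicfekete}, with the $p$-adic place now supplying the capacity-shrinking factor in the adelic product.

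For part (b), I observe that a totally $p$-adic $f_c$-preperiodic point $\alpha$ embeds into $\mathbb{Q}_p$, and since its forward orbit is finite it must lie in $\mathcal{K}_{\mathbb{Q}_p}(f_c)$; Theorem \ref{BBPmainthm}(ii) makes this set empty, ruling out such $\alpha$. For part (c), I fix any $\alpha\in\mathrm{PrePer}(f_c)$ and any embedding $\mathbb{Q}(\alpha)\hookrightarrow\mathbb{C}_p$. The image is $f_c$-preperiodic and hence lies in $\mathcal{K}_{\mathbb{C}_p}(f_c)$, which by Theorem \ref{BBPmainthm}(iii) equals $\mathcal{K}_{\mathbb{Q}_p}(f_c)\subseteq\mathbb{Q}_p$. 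Since this holds for every embedding and every Galois conjugate of $\alpha$ is again $f_c$-preperiodic, $\alpha$ is totally $p$-adic; infiniteness is then automatic because any polynomial of degree $\geq 2$ has infinitely many preperiodic points in $\overline{\mathbb{Q}}$.

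For part (a), I would assemble the adelic set $\mathbb{E}=\prod_v E_v$ over $K=\mathbb{Q}$ defined by
$$E_v=\begin{cases}\mathbb{Z}_p & \text{if } v=p,\\ \mathcal{K}_{\mathbb{C}_v}(f_c) & \text{if } v\neq p.\end{cases}$$
Any totally $p$-adic $f_c$-preperiodic point has all Galois conjugates lying in $E_v$ for every $v$: at $v\neq p$ because conjugates of preperiodic points are preperiodic and hence sit in the corresponding filled Julia set, and at $v=p$ by combining the totally $p$-adic hypothesis with Theorem \ref{BBPmainthm}(i), which identifies $\mathcal{K}_{\mathbb{Q}_p}(f_c)$ with $\mathbb{Z}_p$ when $|c|_p\leq 1$. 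By (\ref{capvadicfilled}), $d_{\infty}(\mathcal{K}_{\mathbb{C}_v}(f_c))=1$ at every $v\neq p$, while $d_{\infty}(\mathbb{Z}_p)=p^{-1/(p-1)}<1$ as cited from Rumely. Consequently $d_{\infty}(\mathbb{E})<1$, and Proposition \ref{adelicfekete} furnishes the desired finiteness.

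The main obstacle I anticipate is not conceptual but bookkeeping: one must verify that the normalization of $|\cdot|_p$ underlying the capacity formula $d_{\infty}(\mathbb{Z}_p)=p^{-1/(p-1)}$ is compatible with the normalization used in (\ref{capvadicfilled}), so that the strict inequality $d_{\infty}(\mathbb{E})<1$ genuinely survives when the local capacities are combined. Once these normalizations are seen to agree (they do, under the conventions fixed in $\S 3$), the proof is essentially a transcription of the totally real argument with $\mathbb{Z}_p$ playing the role that $[-a_c,a_c]$ played at the archimedean place.
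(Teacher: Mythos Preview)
Your proposal is correct and follows essentially the same approach as the paper: the same adelic set $\mathbb{E}$ with $E_p=\mathbb{Z}_p$ and $E_v=\mathcal{K}_{\mathbb{C}_v}(f_c)$ for $v\neq p$, the same capacity computation $d_{\infty}(\mathbb{E})=p^{-1/(p-1)}<1$ feeding into Proposition~\ref{adelicfekete} for part~(a), and the same direct invocation of Theorem~\ref{BBPmainthm}(ii) and~(iii) for parts~(b) and~(c). Your bookkeeping concern about normalizations is well-placed but, as you note, resolved by the conventions fixed in \S3.
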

\begin{proof} Let $|c|_p\leq 1$. Then we consider the adelic set $\mathbb{E}=\{E_v\}_v$ where 
	 $$E_v=\begin{cases}\mathcal{K}_{\mathbb{C}_q}(f_c)&\mbox{if}\,\, v=q\neq p\\ \mathbb{Z}_p&\mbox{if}\,\, v=p\\ \mathcal{K}_{\mathbb{C}}(f_c)&\mbox{if}\,\, v=\infty\end{cases}$$
	Hence the transfinite diameter of the adelic set $\mathbb{E}$ is 
	\begin{align*}d_{\infty}(\mathbb{E})&=d_{\infty}(\mathbb{Z}_p)\prod_{v\neq p} d_{\infty}(\mathcal{K}_{\mathbb{C}_v}(f_c))\\&=d_{\infty}(\mathbb{Z}_p) \quad(\text{using}\,\,\, (\ref{capvadicfilled}))\\&=p^{-\frac{1}{p-1}}<1.\end{align*}
	Applying Proposition \ref{adelicfekete}, it yields that  there are only finitely many algebraic numbers all of whose algebraic conjugates lie  in $\mathcal{K}_{\mathbb{C}_q}(f_c)$ at each place $q\neq p$, in $\mathbb{Z}_p$, and in $\mathcal{K}_{\mathbb{C}}(f_c)$ at the archimedean place $\infty$. The finiteness of $\mathrm{PrePer}(f_c)\cap \mathbb{Q}^{\mathrm{tp}}$ follows.  This completes part (a).
	The proof of  (b)  follows immediately from Theorem \ref{BBPmainthm} (ii). It remains to verify (c). We apply  Theorem \ref{BBPmainthm} (iii) to obtain
	$$ \mathcal{K}_{\mathbb{C}_p}(f_c)=\mathcal{K}_{\mathbb{Q}_p}(f_c)\subseteq \mathbb{Q}_p.$$
	If $\alpha$ is $f_c$-preperiodic, then so are all of its algebraic conjugates, and therefore all preperiodic points are totally $p$-adic.
	Hence $\mathrm{PrePer}(f_c)\cap\mathbb{Q}^{\mathrm{tp}}$ is infinite.
\end{proof}
\begin{remark} We do not completely know if the set $\mathrm{PrePer}(f_c)\cap \mathbb{Q}^{\mathrm{tp}}$ is always nonempty when $|c|_p\leq 1$. However, $f_c$ has a totally $p$-adic fixed point when $1-4c$ is a square in $\mathbb{Q}_p$ by applying  Hensel's lemma to  the polynomial $f(X)=X^2-(1-4c)\in\mathbb{Z}_p[X]$.
\end{remark}

\end{document}